\newcommand{\labbel}{\label}
\newtheorem{theorem}{Theorem}
\newtheorem{proposition}[theorem]{Proposition} 
\newtheorem{corollary}[theorem]{Corollary}
\newtheorem*{theorem*}{Theorem}
\newtheorem*{corollary*}{Corollary}
\theoremstyle{definition}
\theoremstyle{remark}
\DeclareMathOperator{\cf}{cf}
\newcommand{\m}{\mathfrak}
\begin{document}
 
\title{Productivity of $[\mu, \lambda ]$-compactness}

\author{Paolo Lipparini} 
\address{Dipartimento di Matematica\\ Viale delle Riviste Scientifiche\\II Universit\`a di Roma (Tor Vergata)\\I-00133 ROME ITALY}
\urladdr{http://www.mat.uniroma2.it/\textasciitilde lipparin}

\keywords{Productivity; $[\mu, \lambda ]$-compactness, initial $\lambda$-compactness; strongly compact cardinal} 

\subjclass[2010]{54B10, 54D20, 03E55}

\begin{abstract}
We show that 
if $\mu \leq \cf \lambda $ and 
$\lambda$ is  
a strong limit singular cardinal,
then
$[\mu, \lambda ]$-compactness is productive
if and only if either $\mu= \omega $, or  
$\mu$ is $\lambda$-compact.
\end{abstract} 
 
\maketitle  

If $\mu$ and $\lambda$  are infinite cardinals, a topological space is 
\emph{$[\mu, \lambda ]$-compact} if  every open cover by at most $\lambda$  sets has a subcover by less than $\mu$  sets.
We say that $[\mu, \lambda ]$-compactness is \emph{productive} 
if
every product 
of $[\mu, \lambda ]$-compact topological spaces 
is $[\mu, \lambda ]$-compact.
 By a well known theorem of 
 Stephenson  and  Vaughan \cite[Theorem 1.1]{SV}, if $\lambda$ is a strong limit singular cardinal, then 
 $[ \omega , \lambda ]$-compactness (that is, 
\emph{initial $\lambda$-compactness})
is productive. We generalize this result to the case 
when $\mu$ is a strongly compact cardinal, and show that some amount of 
strong compactness is indeed necessary to obtain productivity.

Among the many possible
definitions of strong compactness, we shall use the following one.
A cardinal $ \mu > \omega $   is said to be  \emph{$\lambda$-compact}
 if  there exists a $\mu$-complete $( \mu, \lambda )$-regular 
 ultrafilter. 
Recall that an ultrafilter $D$ is \emph{$(\mu, \lambda)$-regular} provided there exists a family of $ \lambda $-many members of $D$ every intersection of $\mu$-many of which is empty. See \cite{mru} for a survey on regularity of ultrafilters, and for applications to topology,
as well as to other fields.
The cardinal
$\mu$  is \emph{strongly compact} if and only if it is $\lambda$-compact for all 
cardinals $\lambda$.  It is well known that the above definitions are
equivalent to the more usual ones (see, e.~g., Kanamori and Magidor \cite[Section 15]{KM}). 
Notice that, in the above terminology, a cardinal $\mu$ 
is \emph{measurable} if and only if it is
$\mu$-compact. 

We shall state some results in a general form 
relative to a class of topological spaces.
We say that $[\mu, \lambda ]$-compactness is \emph{productive 
for members of a class $\mathcal K$}
if
every product 
$\prod _{j \in J} X_j $ 
of $[\mu, \lambda ]$-compact topological spaces belonging to $\mathcal K$
is $[\mu, \lambda ]$-compact.
Notice that we are not assuming that $\mathcal K$ is closed under products;
hence we are only asking that $\prod _{j \in J} X_j $ is $[\mu, \lambda ]$-compact, 
but it does not necessarily   belong to $\mathcal K$.
Notice also that, trivially, if $\mathcal K' \subseteq \mathcal K$, 
then 
productivity of 
$[\mu, \lambda ]$-compactness
for members of  $\mathcal K$
implies 
productivity of 
$[\mu, \lambda ]$-compactness
for members of  $\mathcal K'$.
In particular, for every class $\mathcal K$ of topological spaces,
productivity of 
$[\mu, \lambda ]$-compactness
implies productivity of 
$[\mu, \lambda ]$-compactness
for members of  $\mathcal K$.

For every infinite cardinal $\mu$, let $\mathcal C_ \mu$ be the class of all infinite regular cardinals $<\mu$, each one endowed with the order topology.

Recall that if $D$ is an ultrafilter over some set $I$, then a topological space $X$ is said to be \emph{$D$-compact}
  if every $I$-indexed sequence $(x_i) _{i \in I} $ of elements of $X$ has some \emph{$D$-limit point} in $X$,
that is, there is a  point $x \in X$ such that 
 $\{ i \in I \mid x_i \in U\} \in D$,
for every open neighborhood $U$ of $x$.

\begin{theorem} \labbel{thm}
Suppose that $ \omega <\mu \leq \lambda $ are cardinals.
  \begin{enumerate}
    \item 
If 
$[\mu, \lambda ]$-compactness is productive
(even, only for members of class $\mathcal C_ \mu$), then
$\mu$ is a $\lambda$-compact cardinal.
\item
If $\mu$ is $\lambda$-compact,
and $\lambda$ is a strong limit singular cardinal
of cofinality $\geq \mu$, then
$[\mu, \lambda ]$-compactness is productive.
  \end{enumerate}
 \end{theorem}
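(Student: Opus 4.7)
Plan. The two directions are proved by essentially independent arguments.

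\textbf{Part (1).} I proceed contrapositively: assuming no $\mu$-complete $(\mu,\lambda)$-regular ultrafilter on $\lambda$ exists, exhibit a product of members of $\mathcal{C}_\mu$ that fails to be $[\mu,\lambda]$-compact. Form $X=\prod_{\alpha<\lambda}\nu_\alpha$, where each $\nu_\alpha$ is a suitably chosen infinite regular cardinal below $\mu$ equipped with its order topology; each $\nu_\alpha\in\mathcal{C}_\mu$ is automatically $[\mu,\lambda]$-compact because $|\nu_\alpha|<\mu$. I then construct a specific $\lambda$-sized open cover of $X$ out of basic open sets each depending on a single coordinate, arranged so that a hypothetical subcover of cardinality $<\mu$ would, via the combinatorics of its indexing data, determine a $\mu$-complete $(\mu,\lambda)$-regular ultrafilter on $\lambda$. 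The contrapositive assumption rules out such an ultrafilter, hence rules out such a subcover, contradicting the assumed productivity of $[\mu,\lambda]$-compactness for $\mathcal{C}_\mu$. The delicate step is engineering the $\nu_\alpha$'s and the cover so that this correspondence is tight.

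\textbf{Part (2).} Fix a $\mu$-complete $(\mu,\lambda)$-regular ultrafilter $D$ on $\lambda$ (which exists by $\lambda$-compactness of $\mu$), let $\kappa=\cf\lambda$, and choose a strictly increasing cofinal sequence $(\lambda_i)_{i<\kappa}$ in $\lambda$ with each $\lambda_i<\lambda$ regular. Given a product $X=\prod_{j\in J}X_j$ of $[\mu,\lambda]$-compact spaces and an open cover $\{U_\alpha\}_{\alpha<\lambda}$ of $X$, the plan has two alternatives. First, if some partial cover $\{U_\alpha\}_{\alpha<\lambda_i}$ already covers $X$, then since $\mu$ is $\lambda_i$-compact (the property being inherited downward from $\lambda$-compactness) and $\lambda_i$ is regular, productivity of $[\mu,\lambda_i]$-compactness at a regular cardinal makes $X$ itself $[\mu,\lambda_i]$-compact, so one extracts a subcover of size $<\mu$. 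Otherwise, pick $y^i\in X\setminus\bigcup_{\alpha<\lambda_i}U_\alpha$ for each $i<\kappa$; applying productivity at the regular cardinal $\kappa\geq\mu$ makes $X$ itself $[\mu,\kappa]$-compact, so the $\kappa$-sequence $(y^i)_{i<\kappa}$ has a complete accumulation point $y\in X$. Then $y\in U_{\alpha^\ast}$ for some $\alpha^\ast<\lambda$, and a basic open neighborhood of $y$ inside $U_{\alpha^\ast}$ would by complete accumulation contain $\kappa$ many of the $y^i$; but $y^i\notin U_{\alpha^\ast}$ whenever $\lambda_i>\alpha^\ast$, a contradiction. So this alternative is vacuous.

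The hypothesis that $\lambda$ is strong limit enters in securing productivity of $[\mu,\lambda_i]$-compactness at each intermediate scale: the bound $2^{\lambda_i}<\lambda$ constrains the ultrafilters and basic open refinements one must juggle when reducing a $\lambda$-scale cover to its partial covers at scale $\lambda_i$. The main technical obstacle is verifying that productivity can actually be invoked at each intermediate scale for the specific product $X$, which requires observing that each factor $X_j$ remains $[\mu,\lambda_i]$-compact --- a consequence of monotonicity of $[\mu,\lambda]$-compactness in the second parameter --- and then locating (or establishing) a productivity result at the regular cardinals $\lambda_i$ and $\kappa$ of the form used in the plan above.
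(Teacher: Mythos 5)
Your proposal correctly identifies the overall shape of the argument, and your concluding reduction in Part (2) (that $[\mu,\nu]$-compactness for all $\nu<\lambda$ together with a complete-accumulation-point argument at $\cf\lambda$ yields $[\mu,\lambda]$-compactness) matches the paper's final step. However, in both parts the actual mathematical content is deferred rather than supplied, and these deferrals are genuine gaps.

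In Part (1) you propose to ``engineer'' a $\lambda$-sized cover of a product $\prod_{\alpha<\lambda}\nu_\alpha$ of members of $\mathcal C_\mu$ so that any subcover of size $<\mu$ determines a $\mu$-complete $(\mu,\lambda)$-regular ultrafilter, but you give no construction, and this is the entire content of the claim. Note also that your scheme must encode two properties of the ultrafilter at once: $(\mu,\lambda)$-regularity and $\mu$-completeness. The paper separates these: it applies Caicedo's abstract compactness theorem to the class $\mathcal C_\mu$ to extract a $(\mu,\lambda)$-regular ultrafilter $D$ for which every regular cardinal $\mu'<\mu$ (with the order topology) is $D$-compact, and then obtains $\mu$-completeness by a different route --- $D$-compactness of $\mu'$ forces $D$ not to be $(\mu',\mu')$-regular for any regular $\mu'<\mu$, and the least $\kappa$ with $D$ $(\kappa,\kappa)$-regular is regular with $D$ $\kappa$-complete. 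Without either this machinery or an explicit combinatorial construction replacing it, Part (1) is unproved.

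In Part (2) your plan stands or falls on the product $X$ being $[\mu,\lambda_i]$-compact at each intermediate scale and $[\mu,\kappa]$-compact at $\kappa=\cf\lambda$ --- that is, on productivity of $[\mu,\nu]$-compactness for $\nu<\lambda$, which you explicitly flag as an obstacle ``to be located or established.'' That is the heart of the theorem, not a side issue. The paper supplies it as follows: for each $\nu<\lambda$ choose a $\mu$-complete $(\mu,\nu)$-regular ultrafilter $D$ over a set $I$ of size $\nu^{<\mu}$ (so $|I|<\lambda$ and $2^{|I|}\leq\lambda$ because $\lambda$ is a strong limit); prove that every $[\mu,\lambda]$-compact space is $D$-compact whenever $D$ is $\mu$-complete and $2^{|I|}\leq\lambda$; use that $D$-compactness is fully productive; and convert $D$-compactness of the product back into $[\mu,\nu]$-compactness via $(\mu,\nu)$-regularity. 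Your choice of an ultrafilter ``on $\lambda$'' would in fact defeat any such transfer, since then $2^{|I|}=2^\lambda>\lambda$; the point of the strong limit hypothesis is precisely to keep the index sets small, not to ``constrain basic open refinements.'' As written, the ultrafilter you fix at the outset plays no role in your argument, which is a sign that the mechanism actually driving the proof is missing.
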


\begin{proof}
(1) First, notice that every 
member of 
$\mathcal C_ \mu$ 
is trivially $[\mu, \lambda ]$-compact.
By assumption, every product of members of $\mathcal C_ \mu$ 
is $[\mu, \lambda ]$-compact. Applying Caicedo \cite[Theorem 3.4]{Ca} 
to $T= \mathcal C_\mu$, we get a
$( \mu, \lambda )$-regular 
 ultrafilter $D$ 
such that every member of $\mathcal C_ \mu $ is $D$-compact; that is, every 
regular cardinal $\mu' < \mu$, with the order topology, is $D$-compact
(notice that \cite{Ca} uses a notation in which the order of the cardinals is reversed).

By \cite[Proposition 1]{topproc}, $D$ is not $(\mu', \mu ')$-regular,
for every regular cardinal $\mu' < \mu$.
By standard arguments (see, e.~g.,  \cite[p. 344]{mru}), if $\kappa$ is
the least infinite cardinal such that a (non principal) ultrafilter $D$ is 
$( \kappa , \kappa )$-regular, then $\kappa$  is a regular cardinal, 
and $D$ is $\kappa$-complete.
In the present case, $\mu \leq \kappa $, hence 
$D$ is $\mu$-complete. Since $D$ 
is also  $( \mu, \lambda )$-regular, then
$\mu$ is $\lambda$-compact.
We have proved (1).

In order to prove (2), we need the following result, whose proof resembles 
\cite[Theorem 1]{hcl}. 

\begin{theorem} \labbel{comehi} 
If $X$ is a $[\mu, \lambda ]$-compact topological space, 
$D$ is a $\mu$-complete ultrafilter over some set $I$, and
$2 ^{|I|} \leq \lambda $, then $X$ is $D$-compact.  
\end{theorem}

\begin{proof} 
Suppose by contradiction that 
$X$ is   $[\mu, \lambda ]$-compact, $D$ is a
$\mu$-complete ultrafilter over $I$, $2 ^{|I|} \leq \lambda  $,
and $X$ is not $D$-compact.
Thus, there is a sequence
$(x_i) _{i \in I} $ of elements of $X$ 
which has no $D$-limit point in $X$.
This means that, for every $x \in X$, there is an open  neighborhood 
$U_x$ of $x$ such that 
$\{ i \in I \mid x_i \not\in U_x  \} \in D$.
For each $x \in X$, choose some $U_x$ as above, and let
$Z_x= \{ i \in I \mid  x_i \not\in U_x  \}$. Thus, $Z_x \in D$. 

For each $Z \in D$, let 
$V_Z = \bigcup \{U_x  \mid x \text{ is such that } Z_x=Z \} $. 
Notice that 
$(V_Z) _{Z \in D} $ is an open cover of $X$, and that
if $i \in Z \in D$, then 
$x_i \not\in V_Z $. 
Since $| D| \leq 2 ^{|I|} \leq \lambda $, then,
by  $[\mu, \lambda ]$-compactness, there is $\mu' < \mu$,
and there is a sequence  
$(Z_ \beta ) _{ \beta \in \mu'} $ of elements of $D$ such that   
$(V _{Z_ \beta }) _{ \beta \in \mu'}$ is a cover of $X$.
Since $D$ is $\mu$-complete,  
$ \bigcap  _{ \beta \in \mu'} Z_ \beta  \in D$,
hence $ \bigcap  _{ \beta \in \mu'} Z_ \beta  \not= \emptyset $.
Choose $i \in \bigcap  _{ \beta \in \mu'} Z_ \beta $. 
Then $x_i \not\in V _{Z_ \beta } $, for every $\beta \in \mu'$,   but this 
contradicts the fact that
$(V _{Z_ \beta }) _{ \beta \in \mu'} $ is a cover of $X$.
\end{proof}  

We are now able to prove Clause (2) in Theorem \ref{thm}. 
So, consider a product $\prod _{j \in J} X_j $
of $[\mu, \lambda ]$-compact topological spaces.
 For every $  \nu < \lambda $,
since $\mu$ is $\lambda$-compact,
there is a $\mu$-complete $(\mu, \nu)$-regular ultrafilter $D$, and
 we can choose $D$ over
$  \nu ^{<\mu}  $
(this is a standard fact, see, e.~g., \cite[Property 1.1(ii)]{mru},
in connection with Form II there).
Letting $ \nu' = \nu ^{<\mu} $, 
we get $ \nu'  < \lambda  $, since $\lambda$ 
is strong limit.

Again since $\lambda$ is strong limit,
$2 ^{ \nu'} < \lambda $,  and,
by Theorem \ref{comehi},  every $X_j$ is $D$-compact, hence 
$\prod _{j \in J} X_j $ is $D$-compact, since
$D$-compactness is productive. 
By \cite[Lemma 3.1]{Ca},
$\prod _{j \in J} X_j $ is 
 $[\mu, \nu ]$-compact, and this holds
for every  $ \nu < \lambda $.
Since 
$\mu \leq \cf \lambda < \lambda $, then 
$\prod _{j \in J} X_j $ is 
 $[\mu, \cf \lambda]$-compact, in particular, 
 $[\cf \lambda, \cf \lambda]$-compact,
and this,
together with  $[\mu, \nu ]$-compactness
for every  $ \nu < \lambda $,
 easily implies  
$[\mu, \lambda ]$-compactness.
 \end{proof}

\begin{corollary} \labbel{nomeas} 
Suppose that  there exists no measurable cardinal. 
If $  \mu \leq \lambda $ are infinite cardinals, and
$[\mu, \lambda ]$-compactness is productive
(even, only for members of   class $\mathcal C_ \mu$), then 
$\mu= \omega $.
\end{corollary}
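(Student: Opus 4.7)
The plan is to derive the corollary as a direct contrapositive application of Theorem \ref{thm}(1), combined with the elementary observation that $\lambda$-compactness downward-implies $\mu$-compactness when $\lambda \geq \mu$.

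Concretely, I would argue by contradiction: assume that $\mu > \omega$, while still having $[\mu,\lambda]$-compactness productive for members of $\mathcal{C}_\mu$. Since $\mu > \omega$, the hypothesis of Theorem \ref{thm}(1) is met, and we conclude that $\mu$ is $\lambda$-compact. By the working definition adopted in the paper, this yields a $\mu$-complete $(\mu,\lambda)$-regular ultrafilter $D$.

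Next, I would observe that since $\mu \leq \lambda$, any $(\mu,\lambda)$-regular ultrafilter is in particular $(\mu,\mu)$-regular: given a witnessing family $\{A_\alpha : \alpha < \lambda\} \subseteq D$ with every $\mu$-sized subintersection empty, the subfamily $\{A_\alpha : \alpha < \mu\}$ witnesses $(\mu,\mu)$-regularity. Hence $D$ is a $\mu$-complete $(\mu,\mu)$-regular ultrafilter, so $\mu$ is $\mu$-compact, that is, measurable, according to the equivalence noted right after the definition of $\lambda$-compactness in the paper. This contradicts the standing hypothesis that no measurable cardinal exists, forcing $\mu = \omega$.

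There is really no hard step here; the only thing to check is the trivial downward-closure $\lambda\text{-compact} \Rightarrow \mu\text{-compact}$ for $\lambda \geq \mu$, which is immediate from the definition via regular ultrafilters. The substance of the argument is entirely contained in Theorem \ref{thm}(1), which the corollary merely repackages under a global anti-large-cardinal hypothesis.
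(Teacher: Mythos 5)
Your proposal is correct and is exactly the argument the paper intends: the corollary is stated without proof, but the paper's own reasoning (made explicit in the proof of Corollary \ref{extens}(1), where it notes that under these hypotheses $\mu$ is ``at least measurable'') is precisely your route via Theorem \ref{thm}(1) plus the observation that a $\mu$-complete $(\mu,\lambda)$-regular ultrafilter is $(\mu,\mu)$-regular, making $\mu$ measurable.
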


If $\mu$ is an infinite cardinal, and
$(X_j) _{j \in J} $ are topological spaces,
the \emph{box$_{< \mu} $ product}
$\Box^{<\mu} _{j \in J} X_j $
is a topological space defined on the Cartesian product  
 $\prod _{j \in J} X_j $, and a base of $\Box^{<\mu} _{j \in J} X_j $
 is given
by the family of all products 
$ \prod _{j \in J} O_j $ such that $ O_j $  is open in  $   X_j $,
 for every $j \in J $, and   
$|\{ j \in J \mid O_j \not= X_j \}| < \mu$.
Of course, the  box$_{<  \omega } $ product
is the usual Tychonoff product.

We say that $[\mu, \lambda ]$-compactness is \emph{productive
for $\Box_{<\mu}$  products} (\emph{of members of  some class  $\mathcal K $})
if
every product 
$\Box^{<\mu} _{j \in J} X_j $
of $[\mu, \lambda ]$-compact topological spaces (belonging to $\mathcal K$)
is $[\mu, \lambda ]$-compact.

\begin{corollary} \labbel{extens} 
Suppose that 
$\mu \leq \lambda $ are infinite cardinals, and that
$[\mu, \lambda ]$-compactness is productive
for members of  some class $\mathcal K \supseteq \mathcal C_ \mu$. Then:
  \begin{enumerate}    \item  
$[ \omega , \lambda ]$-compactness,
too, is productive,  for members of  $\mathcal K$.
\item
$[\mu, \lambda ]$-compactness is productive
for $\Box_{<\mu}$  products of members of   $\mathcal K $. 
 \end{enumerate}
\end{corollary}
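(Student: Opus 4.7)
The plan is to leverage the $D$-compactness machinery that drove Theorem \ref{thm} and Theorem \ref{comehi}. Since $\mathcal{C}_\mu \subseteq \mathcal{K}$, the hypothesis combined with Caicedo \cite[Theorem 3.4]{Ca} and the $\mu$-completeness argument from the proof of Theorem \ref{thm}(1) (ruling out $(\mu', \mu')$-regularity for regular $\mu' < \mu$ via $D$-compactness of members of $\mathcal{C}_\mu$, then invoking \cite[p.~344]{mru}) produces a $\mu$-complete, $(\mu, \lambda)$-regular ultrafilter $D$ such that every $[\mu, \lambda]$-compact member of $\mathcal{K}$ is $D$-compact. This $D$ is the common launching pad for both parts.

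For (2), fix such a $D$ and take $(X_j)_{j \in J}$ in $\mathcal{K}$ all $[\mu, \lambda]$-compact. Each $X_j$ is $D$-compact, and I would verify that $\Box^{<\mu}_{j \in J} X_j$ is also $D$-compact by the coordinatewise construction of a $D$-limit: for a sequence $(x^{(i)})_{i \in I}$ in the product, choose $p_j \in X_j$ a $D$-limit of $(\pi_j(x^{(i)}))_{i \in I}$ for each $j$, and show $p = (p_j)$ is a $D$-limit in the box topology. A basic $\Box^{<\mu}$-open neighborhood of $p$ has the form $\prod_j O_j$ with fewer than $\mu$ nontrivial coordinates, and the $\mu$-completeness of $D$ guarantees that the intersection of the corresponding membership sets is again in $D$. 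Caicedo \cite[Lemma 3.1]{Ca} then promotes $D$-compactness of $\Box^{<\mu}_{j \in J} X_j$ to $[\mu, \lambda]$-compactness, exactly as in the proof of Theorem \ref{thm}(2).

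For (1), the case $\mu = \omega$ is immediate, so assume $\mu > \omega$. Given $(X_j) \in \mathcal{K}$ all $[\omega, \lambda]$-compact, each is a fortiori $[\mu, \lambda]$-compact, and the hypothesis gives that $\prod_j X_j$ is $[\mu, \lambda]$-compact. I would then use the elementary decomposition that, for $\omega < \mu \leq \lambda$, a space $Y$ is $[\omega, \lambda]$-compact if and only if it is both $[\omega, \mu]$-compact and $[\mu, \lambda]$-compact (a cover of size $\leq \lambda$ thins to size $<\mu$ by the latter, then to finite by the former, since $<\mu$ subcovers have size $\leq \mu$). So the task reduces to showing $\prod_j X_j$ is $[\omega, \mu]$-compact: for every regular $\nu < \mu$, I would apply Theorem \ref{comehi} in the regime $\mu = \omega$ (where $\omega$-completeness of the ultrafilter is automatic), picking an $(\omega, \nu)$-regular ultrafilter $D'$ over a sufficiently small index set $I$ with $2^{|I|} \leq \lambda$; each $X_j$ is then $D'$-compact, productivity of $D'$-compactness makes $\prod_j X_j$ $D'$-compact, and Caicedo \cite[Lemma 3.1]{Ca} gives $[\omega, \nu]$-compactness. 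Combining over all regular $\nu < \mu$ yields $[\omega, \mu]$-compactness.

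The main obstacle will be the last step of (1): securing, for each regular $\nu < \mu$, an $(\omega, \nu)$-regular ultrafilter on an index set $I$ with $2^{|I|} \leq \lambda$. This is the combinatorial heart of the argument, and is where the precise shape of $\mathcal{K}$ (together with the fact that the hypothesis already forces $\mu$ to be $\lambda$-compact via Theorem \ref{thm}(1)) must be exploited; without any strong-limit assumption on $\lambda$ this needs careful cardinal-arithmetic bookkeeping.
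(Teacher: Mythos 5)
Your part (2) is essentially the paper's own proof: apply Caicedo's Theorem 3.4 to the class $\mathcal K'$ of $[\mu,\lambda]$-compact members of $\mathcal K$, use $\mathcal C_\mu\subseteq\mathcal K'$ together with the argument from Theorem \ref{thm}(1) to get that the resulting $(\mu,\lambda)$-regular ultrafilter $D$ is $\mu$-complete, observe that $D$-compactness for $\mu$-complete $D$ is preserved by $\Box_{<\mu}$ products, and finish with Caicedo's Lemma 3.1. Part (1) is where you genuinely diverge. The paper never returns to ultrafilters there: it decomposes $[\omega,\lambda]$-compactness into $[\mu,\lambda]$-compactness plus $[\omega,\mu']$-compactness for all $\mu'<\mu$, notes that since Theorem \ref{thm}(1) makes $\mu$ measurable, hence inaccessible, it suffices to consider singular strong limit $\mu'<\mu$ (these are cofinal below $\mu$), and then simply quotes Stephenson--Vaughan for the productivity of $[\omega,\mu']$-compactness for such $\mu'$. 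Your route instead re-proves productivity of $[\omega,\nu]$-compactness for each regular $\nu<\mu$ via Theorem \ref{comehi} and an $(\omega,\nu)$-regular ultrafilter; this is viable and more self-contained (it does not invoke Stephenson--Vaughan), but you leave its key step open. That step is in fact not an obstacle at all, and you already name the ingredient that closes it: Theorem \ref{thm}(1) forces $\mu$ to be measurable, hence a strong limit, so $2^\nu<\mu\leq\lambda$ for every $\nu<\mu$, and an $(\omega,\nu)$-regular ultrafilter over $I=\nu$ exists in ZFC; thus $2^{|I|}\leq\lambda$ and Theorem \ref{comehi} (with ``$\mu$'' there equal to $\omega$, completeness being automatic) applies to each $X_j$. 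You should state this explicitly rather than flag it as a difficulty. One small correction: combining $[\omega,\nu]$-compactness over all regular $\nu<\mu$ yields $[\omega,\mu']$-compactness for every $\mu'<\mu$, not $[\omega,\mu]$-compactness (covers of size exactly $\mu$ are not reached); but the weaker conclusion is all your decomposition actually needs, since the subcover extracted by $[\mu,\lambda]$-compactness has size strictly less than $\mu$ --- this is exactly how the paper phrases condition (ii).
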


 \begin{proof}
(1) is a quite easy corollary of Theorem \ref{thm} 
and of Stephenson  and  Vaughan's result.
Notice that a space is $[ \omega , \lambda ]$-compact
if and only if it is both (i) $[\mu, \lambda ]$-compact,
and (ii) $[ \omega , \mu' ]$-compact, for every $\mu' < \mu$.
Since we have proved in Theorem \ref{thm}(1) that,
under the assumptions of the present corollary, 
if $\mu> \omega $, then 
 $\mu$ is (at least)
measurable,  and since measurable cardinals
are inaccessible, 
(ii) above can be equivalently replaced by
(ii)$'$ $[ \omega , \mu' ]$-compact, for every 
singular strong limit
$\mu' < \mu$.
By assumption,
$[\mu, \lambda ]$-compactness is productive for members of $\mathcal K$.
By Stephenson   and  Vaughan's theorem,
$[ \omega , \mu' ]$-compactness is productive
(in the class of all topological spaces, hence also for members of  $\mathcal K$), for every 
singular strong limit
$\mu' < \mu$. Hence $[ \omega , \lambda ]$-compactness,
being the combination of the above properties  productive in $\mathcal K$, 
is productive, too, in $\mathcal K$.

We now prove (2). 
Let  $\mathcal K'$ be the class of those members of $\mathcal K$
which are $[\mu, \lambda ]$-compact.
Applying  \cite[Theorem 3.4]{Ca} 
to $T= \mathcal K'$, we get a
$( \mu, \lambda )$-regular 
 ultrafilter $D$ 
such that every member of $\mathcal K'$ is $D$-compact.
Since $\mathcal K \supseteq \mathcal C_ \mu$,
hence also $\mathcal K' \supseteq \mathcal C_ \mu$, then, 
by the arguments in the proof of Theorem \ref{thm}(1),
we get that $D$ is $\mu$-complete. 
It is easy to see that if $D$ is a $\mu$-complete ultrafilter, then
every $\Box_{<\mu}$  product of $D$-compact spaces is still $D$-compact. 
In the case at hand, we get that every $\Box_{<\mu}$  product of
members of $\mathcal K'$ is $D$-compact, hence 
$[\mu, \lambda ]$-compact, by \cite[Lemma 3.1]{Ca}, 
since $D$ is $( \mu, \lambda )$-regular.
 \end{proof}

As far as we know,  no complete characterization
is known for those cardinals $\lambda$ 
such that  initial $\lambda$-compactness is productive.
A fortiori, we are unable to  give a characterization for those pairs of cardinals 
$\mu$ and $\lambda$ 
such that
$[\mu, \lambda ]$-compactness is productive.
For sure, 
as a consequence of the next proposition,
we know that the assumption
$\cf \lambda \geq \mu$
in Theorem \ref{thm}(2) is necessary 
(unless the existence of strongly compact cardinals is inconsistent).

\begin{proposition} \labbel{counterex}
Suppose that $ \omega \leq \cf \lambda < \mu \leq \lambda $.

Then there is a $[\mu, \lambda ]$-compact $T_5$ topological space $X$
such that $X ^ \kappa  $ fails to be $[ \lambda , \lambda ]$-compact, for
$\kappa =  2 ^{ \lambda  ^{ <\lambda }} $.

In particular, $[\mu, \lambda ]$-compactness is not productive.
 \end{proposition}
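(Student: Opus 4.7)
I take $X = \cf\lambda$ with the order topology. Ordinal spaces are $T_5$ (hereditarily normal), and since $|X| = \cf\lambda < \mu$, every open cover $\mathcal U$ of $X$ admits a subcover of cardinality $\leq |X| < \mu$ by choosing, for each point of $X$, one element of $\mathcal U$ containing it; hence $X$ is $[\mu,\lambda]$-compact.

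For the failure of $[\lambda,\lambda]$-compactness of $X^\kappa$, I must exhibit an open cover of $X^\kappa$ of cardinality $\leq \lambda$ with no subcover of cardinality $<\lambda$. Set $\theta = \cf\lambda$ and $I = \lambda^{<\lambda}$, so $\kappa = 2^{|I|}$. The space $X = \theta$, with the order topology, is not $[\theta,\theta]$-compact, witnessed by the cover $\{[0,\alpha) : \alpha < \theta\}$; by the framework of \cite{Ca}, for every $(\theta,\theta)$-regular ultrafilter $D$ on $I$ the space $X$ fails to be $D$-compact. The idea is to use the many coordinates of $X^\kappa$, which are enough to be indexed by the family of (sufficiently many) ultrafilters on $I$ since their total number is at most $\kappa$, and to propagate the level-$\theta$ failure of $X$ into a level-$\lambda$ failure of the power.

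The construction will be dual to the proof of Theorem \ref{comehi}. Fix a $(\theta,\theta)$-regular ultrafilter $D$ on $I$ and a sequence $(x_i)_{i \in I}$ of elements of $X$ with no $D$-limit; as in that proof, this produces a family of open sets $\{V_Z : Z \in D\}$ of cardinality $\leq 2^{|I|} = \kappa$ that refines the relevant structure. I identify $\kappa$-many coordinates of $X^\kappa$ with elements of $\mathcal{P}(I)$ and, using a cofinal stratification $\lambda = \sup_{\alpha<\theta}\lambda_\alpha$ with $\lambda_\alpha$ regular $<\lambda$, bundle these sets into $\lambda$-many basic open sets $W_\xi$ in $X^\kappa$, each supported on finitely many coordinates and indexed by pairs $(\alpha,\gamma)$ with $\alpha < \theta$, $\gamma < \lambda_\alpha$. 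The covering property will be verified by the same calculation as in the proof of Theorem \ref{comehi}, applied coordinatewise. The failure of small subcovers will then follow by a diagonal argument: given any subfamily of size $<\lambda$, the total support is a subset of $\kappa$ of cardinality $<\lambda < \kappa$, and one constructs a point $f \in X^\kappa$ escaping the whole subfamily by exploiting the $D$-non-compactness of $X$ along the remaining (unindexed) coordinates.

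The main obstacle is the precise bundling of the $V_Z$'s into $\lambda$-many basic opens and the matching with the stratification of $\lambda$; in particular, the diagonal argument needs enough care with the finite supports that the escaping point can really be constructed. The choice $\kappa = 2^{\lambda^{<\lambda}}$ is exactly what is needed to make both the cover have size $\lambda$ and the diagonalization go through, since $\kappa$ is strictly greater than $\lambda^{<\lambda}$ while small enough that the relevant ultrafilter families are indexable by coordinates. Once the construction is complete, the ``in particular'' clause is immediate: since $[\mu,\lambda]$-compactness implies $[\lambda,\lambda]$-compactness for $\mu \leq \lambda$, failure of the latter for $X^\kappa$ implies failure of the former, so $[\mu,\lambda]$-compactness is not productive.
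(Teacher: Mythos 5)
Your choice of $X=\cf\lambda$ does not work, and the problem is not merely that the combinatorial construction is left unfinished. Write $\theta=\cf\lambda$. For your plan to succeed, $\theta^\kappa$ must fail to be $[\lambda,\lambda]$-compact, which (via Caicedo's machinery) requires that $\theta$ fail to be $D$-compact for \emph{every} $(\lambda,\lambda)$-regular ultrafilter $D$; what you actually invoke is failure of $D$-compactness for every $(\theta,\theta)$-regular $D$, and for singular $\lambda$ these are not the same. Concretely, suppose $\mu$ is $\lambda$-compact and $\theta<\mu$ --- which is precisely the regime in which the paper applies this proposition, to show that (B) does not imply (A). Take a $\mu$-complete $(\mu,\lambda)$-regular ultrafilter $D$. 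Since $\theta<\mu$, every $f:I\to\theta$ is $D$-constant ($\mu$-completeness applied to the $\theta$-many fibres), so the ordinal space $\theta$ is $D$-compact, hence so is every power $\theta^\kappa$; and since $D$ is in particular $(\lambda,\lambda)$-regular, \cite[Lemma 3.1]{Ca} gives that $\theta^\kappa$ is $[\lambda,\lambda]$-compact (indeed $[\mu,\lambda]$-compact). So in that situation the cover you are trying to build provably does not exist, and the statement you want for $X=\theta$ is consistently false. This is exactly why the paper takes $X$ to be the topological sum of $\cf\lambda$ and $\lambda^+$: by \cite[Corollary 2]{notre}, a $(\lambda,\lambda)$-regular ultrafilter must be either $(\cf\lambda,\cf\lambda)$-regular or $(\lambda^+,\lambda^+)$-regular, and one needs both summands so that each horn of this dichotomy contradicts $D$-compactness of $X$. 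The $\lambda^+$ summand is indispensable, and it is still $[\mu,\lambda]$-compact because $\lambda^+>\lambda$.

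A secondary point: even granting a correct choice of space, your argument is a plan rather than a proof --- the bundling of the sets $V_Z$ into $\lambda$-many basic open sets and the diagonalization against subfamilies of size $<\lambda$ are exactly the hard steps, and you defer them. The paper avoids any explicit cover construction: assuming $X^\kappa$ is $[\lambda,\lambda]$-compact, it uses \cite[Lemma 3.3]{Ca} and a sequence in $X^\kappa$ whose projections realize all $I$-indexed sequences in $X$ to extract a single $(\lambda,\lambda)$-regular $D$ making $X$ itself $D$-compact, and then derives the contradiction from the regularity dichotomy above. Your verification that $\cf\lambda$ is $T_5$ and $[\mu,\lambda]$-compact, and your derivation of the ``in particular'' clause, are fine but do not touch the core difficulty.
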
  

\begin{proof}
Let $X$ be the disjoint union of $\cf\lambda$
and $ \lambda ^+$, both endowed with the order topology.
Observe that $X$ is trivially $[\mu, \lambda ]$-compact,
since $\cf \lambda < \mu$. 
Suppose, by contradiction, that
$X ^ \kappa  $
 is $[ \lambda , \lambda ]$-compact.
Let $I$ be the set of all subsets of $\lambda$
of cardinality $ <\lambda$.
Since $| I |= \lambda ^{< \lambda } $, and 
$| X|= \lambda ^+$, there 
are $ (\lambda ^+) ^{\lambda ^{< \lambda }} =
2 ^{ \lambda  ^{ <\lambda }} $    $I$-indexed sequences of
elements of $X$.
Choose a sequence 
$(x_i) _{i \in I} $ of elements of  $X ^ \kappa  $
in such a way that any $I$-indexed sequence of
elements of $X$ can be obtained as the projection
of $(x_i) _{i \in I} $ onto some factor.
Since we are assuming that 
$X ^ \kappa  $
 is $[ \lambda , \lambda ]$-compact, then,
 by \cite[Lemma 3.3]{Ca},
there is a $(\lambda , \lambda )$-regular ultrafilter $D$ over $I$
such that $(x_i) _{i \in I} $ $D$-converges to some element
of $X ^ \kappa  $. 
Since a sequence in a product $D$-converges if and only if 
every projection converges, we get that 
every $I$-indexed sequence of
elements of $X$ $D$-converges in $X$, that is, $X$ is $D$-compact.
By 
\cite[Corollary 2]{notre} 
(and \cite[Property 1.1(xi)]{mru}),
$D$ is either 
$(\cf\lambda , \cf\lambda )$-regular, or
$(\lambda^+ , \lambda^+ )$-regular.
But then, by \cite[Lemma 3.1]{Ca},
$X$ is either  
$[\cf \lambda , \cf \lambda ]$-compact,
or $[ \lambda^+ , \lambda^+ ]$-compact,
but neither possibility occurs, thus we reached a contradiction, hence
$X ^ \kappa  $ is not $[ \lambda , \lambda ]$-compact.

The last statement follows from the trivial fact that, for $\mu \leq \lambda $,
$[ \mu , \lambda ]$-compactness implies  
$[ \lambda , \lambda ]$-compactness.
 \end{proof} 

For $ \omega <\mu \leq \lambda $, consider the following conditions:
  \begin{enumerate}    
\item[(A)]
 $[\mu, \lambda ]$-compactness is productive.
\item[(B)]
 $\mu$ is $\lambda$-compact,
and 
$[ \omega , \lambda ]$-compactness is productive.
  \end{enumerate}   

It follows from Theorem \ref{thm}(1)
and Corollary \ref{extens}(1) 
that Condition (A) above implies Condition (B).
However, by Proposition \ref{counterex}
and Stephenson  and  Vaughan's Theorem,
if  $\mu$ is $\lambda$-compact, 
and $\lambda$ 
is singular strong limit of cofinality
$<\mu$,
then 
(B) $\Rightarrow $  (A) fails.

The present research is partly motivated by \cite{sssr}.
Let $\mathcal H$ be the set of all
$(\mu, \lambda )$-regular ultrafilters 
over $[ \lambda ] ^{< \mu} $. 
By a remark in \cite{sssr}, if 
$[\mu, \lambda ]$-compactness is productive,
then $\mathcal H$ has a minimum, with respect to 
the Comfort order (see Garc{\'{\i}}a-Ferreira \cite{GF}).


\begin{thebibliography}{Kom}








\bibitem[Ca]{Ca} X. Caicedo, {\em The Abstract Compactness Theorem Revisited}, in {\em Logic and Foundations of Mathematics (A. Cantini et al. editors),} Kluwer Academic Publishers  (1999), 131--141.


 \bibitem[GF]{GF} S. Garc{\'{\i}}a-Ferreira,   
   \emph{Comfort types of ultrafilters},
   Proc. Amer. Math. Soc. \textbf{120} (1994), 1251-1260.


\bibitem[KM]{KM} 
 A. Kanamori, M. Magidor,  \emph{The evolution of large cardinal axioms in
set theory}, in:  \emph{Higher Set Theory (Proc. Conf., Math. Forschungsinst., Oberwolfach, 1977)},
edited by G. H. M\"uller and D. S. Scott,
Lecture Notes in Math. Vol. 669 (Springer, Berlin, 1978),
99--275.


\bibitem[L1]{topproc} P. Lipparini,  \emph{Productive
   $[\lambda,\mu]$-compactness and regular ultrafilters}, Topology Proc.
   \textbf{21} (1996), 161--171.

 \bibitem[L2]{notre}  P. Lipparini,  \emph{Decomposable ultrafilters and
   possible cofinalities}, Notre Dame J. Form. Log. \textbf{49} (2008),
   307--312.

\bibitem[L3]{mru}  P. Lipparini,  \emph{More on regular and decomposable ultrafilters in ZFC},
   MLQ Math. Log. Q. \textbf{56} (2010), 340-374.

\bibitem[L4]{hcl}  P. Lipparini,  \emph{For Hausdorff spaces, $H$-closed = $D$-pseudocompact for all ultrafilters $D$}, arXiv:1107.1435 (2011).


\bibitem[L5]{sssr} P. Lipparini, \emph{Topological spaces compact with respect to a set of filters}, arXiv:1210.2120 (2012).



\bibitem[SV]{SV} R. M. Stephenson, Jr., J. E.  Vaughan, 
   \emph{Products of initially $\m m$-compact spaces},
   Trans. Amer. Math. Soc. \textbf{196} (1974), 177--189.




\end{thebibliography}
\end{document}